\newcommand{\YM}{\color{black}}
\newcommand{\RR}{{{\mathbb R}}}
\newcommand{\R} {\mathbb R}
\newcommand{\cuad}{{\sqcap\kern-.68em\sqcup}}
\newcommand{\ve}{\varepsilon}
\newcommand{\be}{\begin{equation}}
\newcommand{\ee}{\end{equation}}
\definecolor{darkgreen}{rgb}{0.2,0.7,0.1}
\newcommand{\sech}{\mathop{\mbox{\normalfont sech}}\nolimits}
\newcommand{\al}{\alpha}
\newcommand{\bt}{\beta}
\def\bm{\left( \begin{array}{cc}}
\def\endm{\end{array}\right)}
\newcommand{\ba}{\begin{equation*}}
\newcommand{\ea}{\begin{equation*}}
\newcommand{\bea}{\begin{eqnarray}}
\newcommand{\eea}{\end{eqnarray}}
\newcommand{\bee}{\begin{eqnarray*}}
\newcommand{\eee}{\end{eqnarray*}}
\newcommand{\ben}{\begin{enumerate}}
\newcommand{\een}{\end{enumerate}}
\def\mm{m}
\numberwithin{equation}{section}
\newtheorem{theorem}{Theorem}[section]
\newtheorem*{theorem*}{Theorem}
\newtheorem{proposition}{Proposition}[section]
\newtheorem{lemma}{Lemma}[section]
\theoremstyle{remark}
\title[Nonexistence of small, odd breathers]{Nonexistence of small, odd breathers for a class of nonlinear wave equations}
\author{Micha{\l}  Kowalczyk}
\address{Departamento de Ingenier\'{\i}a Matem\'atica and Centro
de Modelamiento Matem\'atico (UMI 2807 CNRS), Universidad de Chile, Casilla
170 Correo 3, Santiago, Chile.}
\email {kowalczy@dim.uchile.cl}
\thanks{M. Kowalczyk was partially supported by Chilean research grants FONDECYT 1130126, Fondo Basal CMM-Chile.}
\author{Yvan Martel}
\address{CMLS, \'Ecole polytechnique, CNRS, Universit\'e Paris Saclay, 91128 Palaiseau Cedex, France}
\email{yvan.martel@polytechnique.edu}
\author{Claudio Mu\~noz}
\address{CNRS and Departamento de Ingenier\'{\i}a Matem\'atica and Centro
de Modelamiento Matem\'atico (UMI 2807 CNRS), Universidad de Chile, Casilla
170 Correo 3, Santiago, Chile.}
\email{claudio.munoz@math.u-psud.fr, cmunoz@dim.uchile.cl}
\thanks{C. Mu\~noz was partly funded by  Chilean research grants FONDECYT  1150202, Fondo Basal CMM-Chile, and Millennium
Nucleus Center for Analysis of PDE NC130017}
\thanks{Part of this work was done while the authors were hosted at the IHES during the program \emph{Nonlinear Waves 2016}. Their stay was partly funded by the grant ERC 291214 BLOWDISOL}
\subjclass{35J61}
\begin{document}

\begin{abstract}
In this note, we show that for a large class of nonlinear wave equations with {\it odd} nonlinearities, any globally defined odd solution which is small in the energy space    decays to $0$ in the local energy norm. In particular, this result shows nonexistence of small, odd breathers for some classical nonlinear Klein Gordon equations such as the sine Gordon equation and $\phi^4$ and $\phi^6$ models. It also partially answers a question of Soffer and Weinstein in \cite[p. 19]{MR1681113} about nonexistence of breathers for the cubic NLKG in dimension one. 
\end{abstract}

\maketitle

\section{Introduction}

{\YM\subsection{Main result}
In this note, we consider the following class of nonlinear wave equations,
\begin{equation}\label{wave ac}
\partial_t^2 u  - \partial_x^2 u = \mm u +f(u), \quad (t,x)\in \RR\times\RR,
\end{equation}
where $\mm\in \R$ and the nonlinearity $f\colon \R\to \R$ is a $C^1$, \emph{odd} function such that for some $p>1$,
\[
 |f'(u)|\lesssim |u|^{p-1}, \qquad   \forall |u|<1.
\]
Denote $F(u)=\int_0^u f$.
An important property of (\ref{wave ac}) is the conservation of \emph{energy}  
\be\label{Energy0}
E(u,\partial_t u):=\int \frac{1}{2}\partial_t u^2+\frac{1}{2}\partial_x u^2  - \frac{\mm}2 u^2- F(u),
\ee
along the flow.  In particular,  $H^1\times L^2$ perturbations of the zero solution are referred as \emph{perturbations in the energy space}. For many standard examples,  the Cauchy problem for the model \eqref{wave ac} is globally well-posed
for  initial data 
$(u(0),\partial_t u(0)) = (u_1^{in},u_2^{in}) \in H^1\times L^2$ small enough. 
Here, we do not make any {\it a priori} assumption  on the sign of $\mm$ nor the nature of $F(u)$ that would guarantee that $E(u)$ is coercive and controls the $H^1\times L^2$ norm of some solutions. Instead, for general nonlinearity $f$ as above, we consider global solutions whose energy norm is uniformly small in time.   
Another important property of \eqref{wave ac}  is that for odd initial data the associated solution is also odd for all time. For the rest of this paper, we   work in such framework, and we   consider only odd perturbations in the energy space. Note  that \eqref{wave ac} is invariant under  space translation  and under the Lorentz transformation but since we consider only odd solutions, these invariances are irrelevant here. 

Set
\be\label{eqvarphi0}
  u_1 =u ,\quad   u_2= \partial_t u   ,
\ee
so that in terms of $(u_1, u_2)$, equation  (\ref{wave ac}) becomes 
\begin{equation}\label{eqvarphi}\left\{\begin{aligned}
&  \partial_t u_1 = u_2 \\  &  \partial_t u_2 = \partial_{x}^2 u_1 +\mm u_1 + f(u_1). \end{aligned}\right.\end{equation}
 
Our main result is the following property for small, odd solutions of (\ref{eqvarphi}).

\begin{theorem}\label{TH1}
There exists  $\ve>0$ such that  any odd global $H^1\times L^2$ solution  $(u_1, u_2)$ of (\ref{eqvarphi})  such that \[
\sup_{t\geq 0}\|(u_1(t),u_2(t))\|_{H^1\times L^2}< \ve,
\]
satisfies
\be\label{Conclusion_0}
\lim_{t \to +\infty}   \|(u_1(t),u_2(t))\|_{H^1(I)\times L^2(I)} =0,
\ee 
for any bounded interval  $I\subset \RR$.

In particular, no odd, small  breather solutions exist for  (\ref{wave ac}). 
\end{theorem}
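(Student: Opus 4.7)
The plan is to establish a weighted virial-type monotonicity formula, crucially exploiting the oddness hypothesis through the boundary values $u_1(t,0)=u_2(t,0)=0$ and an attendant Hardy inequality. Fix a scale $A>0$ to be chosen and introduce the bounded, odd weight $\phi(x)=\tanh(x/A)$, so that $\phi'(x)=A^{-1}\operatorname{sech}^2(x/A)$ is even, positive, and concentrated near the origin. Consider the virial functional
\[
\mathcal I(t)=\int_{\R}\phi(x)\,u_2(t,x)\,\partial_x u_1(t,x)\,dx.
\]
Because $u_1$ is odd, $u_2\partial_x u_1$ is odd, and multiplying by the odd $\phi$ gives an even integrand; hence $\mathcal I$ is meaningful, with $|\mathcal I(t)|\le\|\phi\|_{L^\infty}\|u_1\|_{H^1}\|u_2\|_{L^2}\lesssim\ve^2$ uniformly in $t$. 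Using \eqref{eqvarphi} and the pointwise identity $\partial_t(u_2\partial_x u_1)=\tfrac12\partial_x\bigl[u_2^2+(\partial_x u_1)^2+mu_1^2+2F(u_1)\bigr]$, integration by parts in $x$ yields
\[
\frac{d\mathcal I}{dt}=-\tfrac12\int_{\R}\phi'(x)\bigl[u_2^2+(\partial_x u_1)^2+mu_1^2+2F(u_1)\bigr]dx.
\]

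The main technical step is to show that the right-hand side is bounded above by $-c\int\phi'[u_1^2+(\partial_x u_1)^2+u_2^2]\,dx$ for some $c>0$. The kinetic terms contribute the good sign, but the mass term $mu_1^2$ has the wrong sign when $m<0$ (as for the sine-Gordon and $\phi^4$ models) and must be absorbed. The key ingredient is the weighted Hardy inequality for odd functions,
\[
\int_{\R}\phi'(x)\,u_1^2(x)\,dx\le CA^2\int_{\R}\phi'(x)\,(\partial_x u_1)^2(x)\,dx,
\]
which follows from the Muckenhoupt--Bliss criterion applied with the pair $(w,v)=(\phi',\phi')$: a direct computation gives $\sup_{r>0}\bigl(\int_r^\infty \phi'\,dx\bigr)\bigl(\int_0^r(\phi')^{-1}\,dy\bigr)=O(A^2)$. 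The nonlinearity is controlled by $|F(u_1)|\lesssim\ve^{p-1}u_1^2$ via the one-dimensional $L^\infty$ Sobolev embedding. Choosing first $A$ small enough (depending on $|m|$) and then $\ve$ small enough, both potential terms are absorbed into the kinetic ones and coercivity is obtained.

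To conclude, since $\mathcal I(t)$ is uniformly bounded, integrating the resulting monotonicity inequality in time produces
\[
G(t):=\int_{\R}\phi'(x)\bigl[u_1^2+(\partial_x u_1)^2+u_2^2\bigr](t,x)\,dx\in L^1([0,\infty)).
\]
A direct computation of $G'(t)$ using \eqref{eqvarphi}, after transferring the spatial derivative off $u_2$ by integration by parts (to avoid the undefined object $\partial_x u_2$), shows $|G'(t)|\lesssim\ve^2$; thus $G\in L^1\cap W^{1,\infty}([0,\infty))$, and by a standard argument (Barbalat-type) $G(t)\to 0$ as $t\to\infty$. Since $\phi'$ is bounded below by a positive constant on any bounded interval $I\subset\R$, this implies $\|(u_1,u_2)(t)\|_{H^1(I)\times L^2(I)}\to 0$, which is the desired conclusion. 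The hardest point is the coercivity step: without the oddness assumption, the boundary condition $u_1(t,0)=0$ fails, the scale-sensitive Hardy inequality above becomes unavailable, and the mass term $mu_1^2$ for $m<0$ can no longer be absorbed by the kinetic contribution.
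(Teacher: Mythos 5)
Your proof is correct, but it takes a genuinely different route from the paper's at the two key points. First, the choice of virial functional: the paper uses $\mathcal I=\int(\psi\,\partial_xu_1+\tfrac12\psi'u_1)u_2$ with $\psi=\lambda\tanh(x/\lambda)$; the extra term $\tfrac12\int\psi'u_1u_2$ makes the operator $v\mapsto\psi v_x+\tfrac12\psi'v$ antisymmetric, so that \emph{both} the $u_2^2$ contribution and the mass term $\mm u_1^2$ cancel identically in $d\mathcal I/dt$, leaving only $\mathcal B(u_1)$ plus a nonlinear remainder. This buys uniformity in $\mm$ (the paper simply fixes $\lambda=100$), but it loses all information on $u_2$, which the paper must recover with a second functional $\int\sech(x)\,u_1u_2$ (estimate \eqref{10trois}). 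Your momentum-flux functional $\int\phi\,u_2\,\partial_xu_1$ keeps both terms: $u_2^2$ enters with a favorable sign, so a single functional controls the full weighted norm, at the price of having to absorb $\mm u_1^2$ by taking the scale $A$ small depending on $|\mm|$. Second, the coercivity on odd functions: the paper conjugates by $\sqrt{\psi'}$ and invokes the known index of the P\"oschl--Teller operator $-\partial_x^2-\tfrac1{2\lambda^2}\sech^2(x/\lambda)$ (Lemma \ref{le:posVir}), whereas you use Muckenhoupt's criterion to obtain the weighted Hardy inequality $\int\phi'u_1^2\le CA^2\int\phi'(\partial_xu_1)^2$ for odd $u_1$; the Muckenhoupt product is indeed $O(A^2)$ uniformly in $r$, so this more elementary route works, and the same inequality also lets you reabsorb the nonlinear term and recover the $u_1^2$ part of the coercivity. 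The endgames (Barbalat's lemma versus integrating $|\mathcal H'|\lesssim\|u\|^2_{H^1_\omega\times L^2_\omega}$ and extracting a subsequence) are essentially equivalent. The only caveat is that, like the paper, you manipulate objects such as $\partial_xu_2$ and $u_{1,xx}$ for solutions that are merely $H^1\times L^2$; a regularization or density argument should be invoked, but this is exactly the level of formality of the paper's own computation \eqref{Ione}, so it is not a gap specific to your argument.
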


By \emph{breather}, we mean   a solution   in $H^1\times L^2$ which is periodic in time, up to the symmetries of the equation. Several integrable equations have \emph{stable} breather solutions, including the sine-Gordon model \cite{Lamb}
\be\label{SG}
\phi_{tt}-\phi_{xx}=-\sin \phi.
\ee
See also \cite{AM,AM1,AM2} and references therein for more details on the literature.

\medskip

Our paper was motivated by (and gives a partial answer to)  a conjecture of Soffer and Weinstein \cite{MR1681113} which says  that no small amplitude breathers should exist for 
\[
\phi_{tt}-\phi_{xx}=-\phi+\phi^3.
\]
Our assumptions  allows other classical examples such as the $\phi^4$ model \cite{MR2282481,MR2318156,MR1402248} 
\[
\phi_{tt}-\phi_{xx}-\phi+\phi^3=0,
\]
the $\phi^6$ model \cite{Lohe},
\[
\phi_{tt}-\phi_{xx}=-\phi+4\phi^3-3\phi^5,
\]
and the sine-Gordon equation \eqref{SG}.

\medskip

Let us discuss the sine-Gordon model \eqref{SG} in more details. 
The energy of  the sine Gordon equation is
\[
E(\phi,  \phi_t)=\int \frac{1}{2} \phi_t^2+\frac{1}{2} \phi_x^2 +(1-\cos \phi).
\] 
Recall that an explicit  family of breathers is known  \cite{Lamb}
\[
B_{\bt}(t,x)= 4\arctan \Big( \frac{\bt}{\al} \frac{\cos(\al t)}{ \cosh(\bt x)}\Big), \qquad \al^2 +\bt^2 =1.
\]
The function $B_{\bt}$  has arbitrarily small energy norm provided $|\bt|$ is small.
Such  breathers oscillate  around the constant, finite energy state $u=0$, which is linearly stable. 
Note that
for all $t$, $B_{\bt}(t)$ is an  even function, which proves in some sense the necessity of the oddness  assumption to obtain a general result of asymptotic stability of the zero solution  such as Theorem~\ref{TH1}.  

\medskip

At a more technical level, the oddness assumption allows to avoid an {\it even} resonance of the linear  operator, supporting the belief that   the existence of breathers is partly related to the presence of resonances. 
However,  the case of the $\phi^4$ model suggests that the situation is more subtle in general.  For this model, in the presence of a resonance, a formal argument due to  Kruskal and Segur \cite{kruskal_segur} excludes the existence of breathers of small amplitude oscillating around $\pm 1$, which are  linearly stable, constant states. As we mentioned above Theorem~\ref{TH1} implies nonexistence of odd breathers of the $\phi^4$ model oscillating about $0$, however our result does not apply for the case considered in \cite{kruskal_segur} since the nonlinearity  written for $u=\phi\pm 1$ is not  odd. 
 
\medskip

For the proof of Theorem \ref{TH1}, we follow a simplified version of the method developed in our recent work \cite{KMM} on asymptotic stability of the kink   under odd perturbations for the $\phi^4$ model.
The main idea in \cite{KMM} and in this paper is the introduction of a generalized \emph{virial identity}, suitably constructed for each considered problem. 
Such approach is inspired by previous works by the second author and Merle \cite{MR1753061,zbMATH01631995,MM3} for the generalized Korteweg-de Vries equations and  by Merle and Rapha\"el \cite{MR2150386} for   nonlinear Schr\"odinger equations. Unlike in those works, here the analysis is performed for solutions in the neighborhood of $0$ and  the spectral analysis reduces to classical operators.
\subsection{Previous results}

Rigorous proofs of nonexistence of breathers solutions in scalar field equations date back to the work by Coron \cite{Coron}. He showed that under the assumption of vanishing energy and $L^\infty$ norm of the solution at infinity in space, and an $L^1_{tx}$ integrability condition, breathers cannot have arbitrarily small periods. Later, Vuillermot \cite{Vuillermot} showed that under some growth and convexity assumptions on the nonlinearity, there are no breather  solutions for suitable scalar field equations. Some formal arguments for existence and nonexistence of breather solutions can be found in Kichenassamy \cite{Kich}, and Birnir-McKean-Weinstein \cite{Birnir}, respectively. Finally, Denzler \cite{denzler} provided a proof of nonexistence of breather  solutions for nonlinearities that are small, complex-analytic perturbations of the sine-Gordon case. From his result, it is concluded that the existence of breathers is a very rare property, probably related to the integrability of the equation.

\medskip

As in the present paper, the nonexistence of breather solutions for \eqref{wave ac} can also be seen  as a consequence of  the asymptotic stability of the vacuum solution in some topology. From this point of view, we refer to the original works of Buslaev and Perelman \cite{bus_per1,bus_per2}, and Buslaev-Sulem \cite{buslaev_sulem} on the NLS case, and Soffer-Weinstein \cite{MR1681113} on nonlinear Klein-Gordon models in 3D. See also the works by Delort \cite{Delort,Delort_fourier}, Lindblad-Soffer \cite{LS1,LS2,LS3}, Bambusi and Cuccagna \cite{Bam_Cucc} and Sterbenz \cite{Ste} on decay of small solutions for nonlinear Klein-Gordon equations in one dimension.  This method has been pushed forward by considering different nonlinearities and lower dimensions. For recent results in this area, see e.g. \cite{MR1664792,Tsai_Yau, Rod_Soffer_Schlag, MR2373326,cuccagna_3, cuccagna_4, MR2835867,Kr_Sch, Bet_Gra_Smets1,Lin_Tao}.

}

\section{Proof of the theorem}\label{VIRIAL}

\medskip

\noindent
{\underline{Step 1: Virial identity.}}
Recall the set of coupled equations \eqref{eqvarphi}.  For a smooth and bounded  function $\psi$ to be chosen later, let
\begin{equation}\label{defI}
\mathcal I (u) := \int \psi (\partial_x u_1) u_2 + \frac 12 \int \psi' u_1 u_2 = \int \left(\psi \partial_x u_1 + \frac 12 \psi' u_1\right) u_2.
\end{equation}
Let $(u_1,u_2)$ be a solution of (\ref{eqvarphi}).  {\YM Since for any $v\in H^1$, $\int (\left(\psi  v_x+\frac{1}{2} \psi' v\right) v=0$, one computes} 
\begin{equation}\label{Ione}
\begin{aligned}
\frac{d}{dt} \mathcal I (u)  & = \int \left(\psi  u_{1,x}+\frac{1}{2} \psi'  u_1\right) u_{1,xx}  
  +  \int \left(\psi  u_{1,x}+\frac{1}{2} \psi'  u_1\right) f(u) \\
& =-\mathcal B(u_1)-\int \psi'\left[ F(u_1)-\frac{1}{2} u_1 f(u_1)\right],
\end{aligned}
\end{equation}
where we have denoted
\[
\mathcal B(u_1)=-\int \left(\psi  u_{1,x}+\frac{1}{2} \psi'  u_1\right) u_{1,xx}=\int \psi' (\partial_x u_1)^2 -\frac{1}{4}\int\psi'''u_1^2.
\]
Therefore,
\begin{equation}
\label{moondawn}
-\frac d{dt} {\mathcal I} =\mathcal B(u_1) + \int \psi'\left[ F(u_1)-\frac{1}{2} u_1 f(u_1)\right].
\end{equation}

\medskip

\noindent
{\underline{Step 2: Coercivity of the bilinear form $\mathcal B$.}}
Now we choose a specific function $\psi$ and we consider the question of the coercivity of the bilinear form $\mathcal B$. 
Let $\lambda>0$ be fixed.   We set  
\begin{equation}\label{defpsi}
\psi(x) : = \lambda\tanh\left(\frac{x}{\lambda}\right),
\end{equation}
in  the definition of $\mathcal I$. 
Note that $\zeta>0$ everywhere. Let $w$ be the following auxiliary function
\be\label{def w}
w:=\zeta u_1;
\quad \zeta(x) :=\sqrt{\psi'(x)}=\sech \left(\frac{x}{\lambda}\right).
\ee
First, note that by integration by parts,
\begin{align*}
\int w_x^2 
&= \int (\zeta \partial_x u_1 +\zeta' u_1  )^2
  = \int \psi' (\partial_x u_1)^2 + 2 \int \zeta \zeta' u_1 (\partial_x v_1) + \int (\zeta')^2 u_1^2 \\
 & = \int \psi' (\partial_x u_1)^2  - \int \zeta \zeta'' u_1^2 \\
 & = \int \psi' (\partial_x u_1)^2  - \int\frac{\zeta''}{\zeta} w^2.
\end{align*}
Thus,
\begin{equation}\label{idgrad}
\int \psi' (\partial_x u_1)^2 = \int w_x^2  +  \int\frac{\zeta''}{\zeta} w^2.
\end{equation}
Second,
$$
\int\psi'''u_1^2 = \int \frac{(\zeta^2)''}{\zeta^2} w^2
= 2 \int \left(\frac {\zeta''}{\zeta}+ \frac{(\zeta')^2}{\zeta^2} \right) w^2.
$$
Therefore,
\begin{equation}\label{change}\begin{aligned}
\mathcal B(u_1) &= \int \psi' (\partial_x u_1)^2 -\frac{1}{4}\int\psi'''u_1^2\\
& = \int w_x^2 + \frac 12  \int \left( \frac {\zeta''}{\zeta} -\frac{(\zeta')^2}{\zeta^2} \right) w^2.
\end{aligned}\end{equation}
Set
\[
{\mathcal B}^\sharp(w):= \int \left(w_x^2 - V w^2\right),\quad \hbox{where} \quad
V:= - \frac 12    \left( \frac {\zeta''}{\zeta} -\frac{(\zeta')^2}{\zeta^2} \right),
\]
so that 
\be\label{B sharp}
 {\mathcal B}^\sharp(w)=\mathcal B(u_1).
\ee
Note that by \eqref{defpsi} and direct computations,
\be\label{V}
V(x)=
\frac{1}{2\lambda^2}\sech^2\left(\frac{x}{\lambda}\right).
\ee
{\YM
Recall that the index of the operator 
$
-\frac{d^2}{dx^2}-V
$
associated to $\mathcal B^\sharp$ is  $1$, which means  that this operator has only one  negative, discrete eigenvalue with an even   corresponding eigenfunction. This follows from the well known fact (see e.g. Titchmarsh \cite[\S 4.19]{Tit} and \cite[p. 55]{Goldman})  that for any $\lambda>0$, the index $\kappa$ of the operator 
\[
-\frac{d^2}{dx^2}-\frac{V_0}{\lambda^2}\sech^2\left(\frac{x}{\lambda}\right), \quad V_0\in [0,+\infty),
\]
is the largest integer  such that 
\[
\kappa<\frac{1}{2}\sqrt{4 V_0+1}+\frac{1}{2}.
\]
Here, our claim concerning $\mathcal B^\sharp$ follows from
$
1< \frac{\sqrt{3}+1}{2}<2.
$
In fact, we claim the following more precise result.
 \begin{lemma}\label{le:posVir}
For any $\lambda>0$ and   for any odd function $w\in H^1$, 
\begin{equation}\label{posVir1}
    {\mathcal B}^\sharp(w)
  \geq \frac{3}{4} \int w_x^2.
\end{equation}
\end{lemma}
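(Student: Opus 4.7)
My plan is to reduce \eqref{posVir1} to the equivalent inequality
\[
\int V w^2\, dx \leq \frac{1}{4} \int w_x^2\, dx
\]
for every odd $w \in H^1$: once this is known, \eqref{posVir1} follows immediately from $\mathcal{B}^\sharp(w) = \int w_x^2 - \int V w^2$.

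The mechanism I would use is a Hardy-type factorization based on an explicit odd null state for the auxiliary operator $L := -\partial_x^2 - 4V$. Exploiting the closed form \eqref{V}, the natural candidate is $\phi(x) := \tanh(x/\lambda)$; a short direct computation yields
\[
-\phi''(x) = \frac{2}{\lambda^2}\sech^2(x/\lambda)\tanh(x/\lambda) = 4V(x)\phi(x),
\]
so $L\phi = 0$. Since $\phi$ is odd, smooth, bounded and vanishes only at $x = 0$, any odd $w \in H^1$ may be written as $w = \phi h$ with $h$ even. Substituting this decomposition into $\int w_x^2$, expanding the square, and using the algebraic identity $(\phi')^2 = (\phi\phi')' - \phi\phi''$ together with one integration by parts (to cancel the cross term $2\phi\phi' h h'$) and the null-state equation $-\phi'' = 4V\phi$, I expect to reach the conjugation identity
\[
\int w_x^2\, dx - 4 \int V w^2\, dx = \int \phi^2 (h')^2\, dx.
\]
The right-hand side being manifestly nonnegative, division by four gives the desired estimate.

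The main technical obstacle is to justify the factorization and the integrations by parts when $w$ is merely $H^1$: the quotient $h = w/\phi$ is a priori singular at the origin, and one must verify that $\phi^2 (h')^2$ is integrable and that boundary terms at infinity vanish. I would circumvent this by first proving the identity for smooth odd functions compactly supported away from $x = 0$, where $\phi$ is bounded below and every manipulation is trivially licit, and then passing to the limit by a standard density argument in the odd subspace of $H^1$. The fact that odd $H^1(\RR)$ functions vanish at $0$ (through the Sobolev embedding $H^1 \hookrightarrow C^0$) and decay at infinity is precisely what makes this approximation procedure work, and the weighted integrability of $\phi^2 (h')^2$ then emerges as an output of the identity rather than a hypothesis.
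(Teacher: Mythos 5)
Your proposal is correct, and it shares the paper's basic reduction: both arguments isolate $\frac34\int w_x^2$ and then must show that the residual quadratic form $\int w_x^2 - \frac{2}{\lambda^2}\int \sech^2(x/\lambda)\,w^2$ (i.e.\ the form of $-\partial_x^2-4V$) is nonnegative on odd $H^1$ functions. Where you diverge is in how that positivity is established. The paper simply cites the classical count of bound states for P\"oschl--Teller potentials $-\frac{d^2}{dx^2}-\frac{V_0}{\lambda^2}\sech^2(x/\lambda)$ (Titchmarsh, Gol'dman et al.): for $V_0=2$ the index is exactly $1$, the unique negative eigenvalue has an even eigenfunction, so the form is nonnegative on the odd subspace. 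You instead give a self-contained Hardy-type factorization through the explicit zero-energy resonance $\phi=\tanh(x/\lambda)$ of $-\partial_x^2-4V$, obtaining the identity $\int w_x^2-4\int Vw^2=\int\phi^2(h')^2\ge 0$ for $w=\phi h$; your computation $-\phi''=4V\phi$ and the expansion of $\int(\phi'h+\phi h')^2$ check out, and the integration by parts does cancel the cross term as claimed. Your treatment of the technical point is also sound: odd $H^1(\RR)$ functions vanish at the origin, the set of smooth odd functions supported away from $0$ is dense in the odd subspace of $H^1$ (the standard cutoff argument works precisely because $|w(x)|\lesssim \|w_x\|_{L^2}|x|^{1/2}$ near $0$), and since both sides of the final inequality are $H^1$-continuous ($V$ being bounded), the inequality passes to the limit even though the pointwise identity involving $h$ need not. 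The trade-off is that the paper's proof is shorter but leans on an external spectral fact, while yours is longer but elementary and makes visible the exact mechanism (the odd resonance $\tanh$) behind the positivity; it also yields the sharper identity with the explicit nonnegative remainder $\frac14\int\phi^2(h')^2$.
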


\begin{proof} 
We write 
\begin{equation}
\mathcal B^\sharp(w)=\frac 34 \int w_x^2+\frac 14 \int \left(w_x^2 -  \frac{2}{\lambda^2}\sech^2\left(\frac{x}{\lambda}\right) w^2\right).
\label{decomp pos Vir}
\end{equation}
Since $ \frac{1}{2}\sqrt{4.2+1}+\frac{1}{2}=2$,
the second term in the right-hand side above is nonnegative for any odd function $w$, and the result follows.
\end{proof}}

\medskip

\noindent
{\underline{Step 3: Control of the error terms and conclusion of the Virial argument.}}
{\YM As in statement of Theorem \ref{TH1}, we consider an odd   solution $(u_1(t),u_2(t))$ of \eqref{eqvarphi}   global in $H^1\times L^2$ for $t\geq 0$ and satisfying, for all $t\geq 0$,
\begin{equation}\label{smallphi}
 \|(u_1(t),u_2(t))\|_{H^1\times L^2} < \varepsilon.
\end{equation}
We define}
\begin{equation}\label{nloc}
\|u_1\|_{H^1_{\omega}}^2 :=
\int \left( |\partial_x u_1|^2 + u_1^2   \right) \sech\left(x\right),\quad
\|u_2\|_{L^2_{\omega}}^2 :=
\int  u_2^2   \sech\left(x\right),
\end{equation}
and
\begin{equation}\label{nloc2}
\|(u_1,u_2)\|_{H^1_{\omega}\times L^2_{\omega}}^2 :=\|u_1\|_{H^1_{\omega}}^2 + \|u_2\|_{L^2_{\omega}}^2.
\end{equation}

The key ingredient of the proof of asymptotic stability in the energy space is the following result.
\begin{proposition}\label{pr:11}
For $\varepsilon>0$ small enough,
\begin{equation}\label{11un}
\int_{0}^{+\infty}  \|(u_1(t),u_2(t))\|_{H^1_{\omega}\times L^2_{\omega}}^2  dt 
\lesssim \varepsilon^2 .
\end{equation}
\end{proposition}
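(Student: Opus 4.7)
The plan is to integrate identity \eqref{moondawn} in time, apply Lemma~\ref{le:posVir} to extract a positive bilinear form, absorb the nonlinear remainder thanks to the smallness \eqref{smallphi}, and finally supplement the estimate by a second virial-type functional to recover the $u_2$ contribution to \eqref{nloc2}.

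Since $\psi$ and $\psi'$ from \eqref{defpsi} are uniformly bounded, Cauchy--Schwarz applied to \eqref{defI} gives $|\mathcal I(u(t))|\lesssim \|u_1(t)\|_{H^1}\|u_2(t)\|_{L^2}\lesssim \varepsilon^2$ for every $t\geq 0$. Integrating \eqref{moondawn} from $0$ to $T$ and letting $T\to\infty$ then yields
\begin{equation*}
\int_0^{+\infty}\!\mathcal B(u_1)\,dt + \int_0^{+\infty}\!\!\int \psi'\bigl[F(u_1)-\tfrac12 u_1 f(u_1)\bigr]\,dx\,dt \;\lesssim\; \varepsilon^2.
\end{equation*}
Since $u_1$ is odd, so is $w=\zeta u_1$, and Lemma~\ref{le:posVir} provides $\mathcal B(u_1)=\mathcal B^\sharp(w)\geq \tfrac34\int w_x^2$; combining this with \eqref{idgrad} and the identity \eqref{V} (which implies in addition $\int V w^2\leq \tfrac14\int w_x^2$), and using a Hardy-type inequality for odd $H^1$ functions based on $u_1(0)=0$, one obtains
\begin{equation*}
\mathcal B(u_1) \;\gtrsim\; \int \sech(x/\lambda)\bigl[(\partial_x u_1)^2 + u_1^2\bigr]\,dx,
\end{equation*}
after choosing $\lambda$ large enough that $\sech(x)\leq \sech(x/\lambda)$ so that the weight of \eqref{nloc} is dominated. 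The nonlinear error is handled via $|f'(u)|\lesssim |u|^{p-1}\Rightarrow |F(u)-\tfrac12 uf(u)|\lesssim |u|^{p+1}$, Sobolev embedding and \eqref{smallphi}:
\begin{equation*}
\Bigl|\int \psi'\bigl[F(u_1)-\tfrac12 u_1 f(u_1)\bigr]\,dx\Bigr| \lesssim \|u_1\|_{L^\infty}^{p-1}\int \psi' u_1^2\,dx \lesssim \varepsilon^{p-1}\mathcal B(u_1),
\end{equation*}
which is absorbed by a fraction of $\mathcal B(u_1)$ for $\varepsilon$ small. This produces the partial bound $\int_0^{+\infty}\|u_1(t)\|_{H^1_\omega}^2\,dt \lesssim \varepsilon^2$.

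The identity above carries no positive sign on $u_2$, so we introduce the auxiliary functional
\begin{equation*}
\mathcal J(u) := \int \sech(x)\,u_1 u_2\,dx, \qquad \sup_{t\geq 0}|\mathcal J(u(t))|\lesssim \varepsilon^2.
\end{equation*}
Differentiating in $t$, substituting \eqref{eqvarphi} and integrating by parts twice yields
\begin{align*}
\frac{d}{dt}\mathcal J(u) = \int \sech(x)\,u_2^2\,dx &- \int \sech(x)(\partial_x u_1)^2\,dx + \tfrac12\int \sech''(x)\,u_1^2\,dx\\
&+ \mm\int \sech(x)\,u_1^2\,dx + \int \sech(x)\,u_1 f(u_1)\,dx.
\end{align*}
Isolating the positive $\int \sech(x)u_2^2$ term on the left, integrating in time on $[0,+\infty)$, bounding the boundary contributions by $|\mathcal J|\lesssim \varepsilon^2$, absorbing all the $u_1$-contributions by the $H^1_\omega$ bound just obtained (using $|\sech''(x)|\lesssim \sech(x)$), and controlling the nonlinear piece by an extra $\varepsilon^{p-1}$ factor, one deduces $\int_0^{+\infty}\|u_2(t)\|_{L^2_\omega}^2\,dt \lesssim \varepsilon^2$. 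Adding the two bounds gives \eqref{11un}.

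The main technical obstacles are (i) the Hardy step in the coercivity argument, which genuinely relies on the oddness of $w$ and is the structural reason for the oddness hypothesis in Theorem~\ref{TH1}; and (ii) the compatibility between the two virials: the first is naturally formulated with weight $\sech(x/\lambda)$ while the second uses $\sech(x)$, so one must carefully tune $\lambda$ so that all the $u_1$-contributions produced by $\mathcal J$ — notably the sign-indefinite $\mm\int \sech(x)u_1^2$ term — are effectively dominated by the output of the first virial.
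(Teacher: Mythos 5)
Your overall architecture is the same as the paper's: the virial functional $\mathcal I$ with the coercivity of Lemma~\ref{le:posVir} to control $\|u_1\|_{H^1_\omega}$, followed by the auxiliary functional $\int \sech(x)\,u_1u_2$ to recover $\|u_2\|_{L^2_\omega}$; your treatment of the second functional and of the time integration is fine. However, there is a genuine gap in your absorption of the nonlinear remainder in the first virial. You bound
$\bigl|\int \psi'[F(u_1)-\tfrac12 u_1 f(u_1)]\bigr| \lesssim \|u_1\|_{L^\infty}^{p-1}\int \psi' u_1^2$ and then claim $\int \psi' u_1^2 \lesssim \mathcal B(u_1)$. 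Since $\psi'=\zeta^2$, the left side equals $\int w^2$, while $\mathcal B(u_1)=\mathcal B^\sharp(w)\leq \int w_x^2$; so you are asserting $\int w^2 \lesssim \int w_x^2$ for odd $w$. There is no such Poincar\'e-type inequality on the line, and neither Lemma~\ref{le:posVir} nor a Hardy inequality gives it: the coercivity only controls the \emph{weighted} quantity $\int \sech^2(x/\lambda)\,w^2$ (from $\int Vw^2\leq \tfrac14\int w_x^2$), not $\int w^2$ itself. Concretely, taking $w$ odd, equal to a small constant $\delta$ on $[1,R]$ with $R\sim \lambda\log(\varepsilon/\delta)$ (which is compatible with $w=\zeta u_1$, $\|u_1\|_{H^1}\lesssim\varepsilon$), one gets $\int w^2\sim \delta^2 R$ while $\int w_x^2\sim\delta^2$, so the ratio is unbounded as $\delta\to0$ and the absorption by a fraction of $\mathcal B(u_1)$ fails. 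The step that loses the game is the crude bound $\int\psi'|u_1|^{p+1}\leq \|u_1\|_{L^\infty}^{p-1}\int\psi' u_1^2$, which discards the extra exponential decay carried by the higher power of $u_1$. The paper's proof keeps that structure: writing $\psi'|u_1|^{2+q}\lesssim e^{qx/\lambda}|w|^{2+q}$ on $x>0$ and integrating by parts against the \emph{growing} weight $e^{qx/\lambda}$, using $w(0)=0$ (oddness), it establishes the estimate $\int\psi'|u_1|^{2+q}\lesssim \|u_1\|_{L^\infty}^q\|\partial_x w\|_{L^2}^2$ (their \eqref{SFcinq}), which is exactly what is needed for the absorption. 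Your proof needs this lemma (or an equivalent substitute); as written the step is false.

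A secondary, non-fatal imprecision: your claimed coercivity $\mathcal B(u_1)\gtrsim \int\sech(x/\lambda)\bigl[(\partial_xu_1)^2+u_1^2\bigr]$ is too strong as stated. The argument only yields the weight $\sech^4(x/\lambda)$ for the $u_1^2$ term (and a comparable one for the gradient term after handling the cross terms by integration by parts, as the paper does), and the version with weight $\sech(x/\lambda)$ is actually false (test with $u_1$ concentrated far from the origin). Since the target weight in \eqref{nloc} is $\sech(x)\leq \sech^4(x/100)$, the conclusion you need still holds, but the intermediate statement should be corrected.
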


\begin{proof}[Proof of Proposition \ref{pr:11}]
We consider the virial-type quantity $\mathcal I(t)$   defined in \eqref{defI} with $\lambda=100$.

\medskip

The proof of \eqref{11un} is based on  the following two estimates, which hold for some  constant  $C_1, C_2>0$:
\begin{align}
  -\frac d{dt} \mathcal I  
&   \geq   C_1   \|u_1\|_{H^1_{\omega}}^2 ,  
\label{10un} \\
  \frac d{dt} \int \sech\left(x\right) u_1 u_2 
&\geq    \|u_2\|_{L^2_{\omega}}^2
-C_2  \|u_1\|_{H^1_{\omega}}^2 .
\label{10trois}
\end{align}

\medskip
{\YM 
First we prove   \eqref{11un} assuming \eqref{10un}   and \eqref{10trois}.
Integrating \eqref{10un} on $[0,t_0]$, using the bound \eqref{smallphi}, and passing to the limit as $t_0\to +\infty$, we find  
\begin{equation}\label{11unbis}
\int_{0}^{+\infty}  \|u_1(t)\|_{H^1_{\omega}}^2  dt 
\lesssim \varepsilon^2 .
\end{equation}
Then, using 
\eqref{10trois} similarly, we obtain \eqref{11un}.}

\medskip

Thus, to finish the proof, we only have to prove \eqref{10un} and \eqref{10trois}. We begin with the proof  of \eqref{10un}. We have from \eqref{moondawn} and \eqref{B sharp},
{\YM
\[
-\frac d{dt} {\mathcal I} = \mathcal B (u_1) -  \int \psi'\left[ F(u_1)-\frac{1}{2} u_1 f(u_1)\right] .
\]
Recall from Section \ref{VIRIAL} the notation $w = u_1 \zeta=u_1 \sech\left(\frac x{100}\right)$ (see \eqref{def w}).
 From Lemma \ref{le:posVir}, we have
\[
\mathcal B (u_1)=  \mathcal B^\sharp(w) \geq \frac 34 \int w_x^2 \quad \text{and equivalently}\quad
 \int w_x^2 \geq  \frac {1}{5.10^3} \int    \sech^2\left(\frac{x}{100}\right) w^2.
\]
Thus,  
\begin{equation}\label{h1}
\mathcal B (u_1)\gtrsim\|\partial_x w\|_{L^2}^2\gtrsim
\int  \sech^2\left(\frac {x}{100}\right) w^2=
\int \sech^4\left(\frac {x}{100}\right) u_1^2\gtrsim
\int \sech\left({x}\right) u_1^2 .
\end{equation}
Next, we have
\begin{align*}
\|\partial_x w\|_{L^2}^2 &\gtrsim \int \sech^2\left(\frac {x}{100}\right) |\partial_x w|^2 
  = \int \sech^2\left(\frac {x}{100}\right) |\zeta \partial_x u_1 + \zeta' u_1 |^2\\
& \gtrsim \int \sech^4\left(\frac {x}{100}\right) |\partial_x u_1|^2
+ 2 \int \sech^2\left(\frac {x}{100}\right) \zeta \zeta' (\partial_x u_1) u_1 + \int \sech^2\left(\frac {x}{100}\right) (\zeta')^2 u_1^2\\
& \gtrsim \int \sech\left(x\right)|\partial_x u_1|^2
+ \int u_1^2\left( - \left(\sech^2\left(\frac {x}{100}\right) \zeta \zeta'\right)' + \sech^2\left(x\right) (\zeta')^2\right).
\end{align*}
Thus, using \eqref{h1},
\[
\int \sech (x )|\partial_x u_1|^2
\lesssim \|\partial_x w\|_{L^2}^2+\int \sech^2 (x ) u_1^2
\lesssim \|\partial_x w\|_{L^2}^2 \lesssim \mathcal B (u_1).
\]
This implies
\begin{equation}\label{rwv}
 \mathcal B (u_1)\gtrsim\|\partial_x w\|_{L^2}^2 \gtrsim \|u_1\|_{H^1_{\omega}}^2.  
\end{equation}}

{\YM Now, we claim  that for any $q>0$,
\begin{equation}\label{SFcinq}
\int  \psi' |u_1|^{2+q} \lesssim \|u_1\|_{L^\infty}^q \|\partial_x w\|_{L^2}^2 \lesssim \varepsilon^q \|\partial_x w\|_{L^2}^2.
\end{equation}
Indeed, by parity, the definition of $\psi$ \eqref{defpsi} and $w$ \eqref{def w}, we have (with $\lambda=100$)
\begin{align*}
\int  \psi' |u_1|^{2+q}
\lesssim   \int_0^{+\infty} e^{-\frac {2 x}{{\lambda }}} |u_1|^{2+q}
\lesssim   \int_0^{+\infty} e^{q \frac x{\lambda}} |w|^{2+q}.
\end{align*}
Integrating by parts and using $w(0)=0$ (the function $w$ is odd)
\begin{align*}
\int_0^{+\infty} e^{q \frac x{\lambda}} |w|^{2+q}
&= - \frac{\lambda}q \int_0^{+\infty} e^{q \frac x{\lambda}} \partial_x(|w|^{2+q})
= - \frac {2+q}q \lambda \int_0^{+\infty} e^{q\frac x{\lambda}} (\partial_x w) w |w|^q\\
&\leq C \|u_1\|_{L^\infty}^{\frac q2} \int_0^{+\infty} e^{q \frac x{2 \lambda}} |\partial_x w| |w|^{1+\frac q2}
\leq C^2 \|u_1\|_{L^\infty}^q \int_0^{+\infty} |\partial_x w|^2 +\frac 14  \int_0^{\infty} e^{q\frac x{\lambda}} |w|^{2+q}.
\end{align*}
Thus,
\[
\int_0^{+\infty} e^{q \frac x{\lambda}} |w|^{2+q}
\lesssim  \|u_1\|_{L^\infty}^q \|\partial_x w\|_{L^2}^2,
\]
and \eqref{SFcinq} is proved.  Since, for some $p>1$,
\[
\left|  F(u_1)-\frac{1}{2} u_1 f(u_1) \right |\lesssim |u_1|^{p+1},
\]
we   estimate  by \eqref{SFcinq},
\begin{equation}\label{SFsix}
\left|\int \psi'\left[ F(u_1)-\frac{1}{2} u_1 f(u_1)\right]\right| \lesssim \varepsilon^{p-1} \|\partial_x w\|_{L^2}^2.
\end{equation}}
Combining \eqref{rwv} and \eqref{SFsix} proves \eqref{10un} for $\ve$ small enough.


Finally, we show  \eqref{10trois}. From \eqref{eqvarphi}, we compute
\[
\begin{aligned}
\frac{d}{dt}\int \sech(x) u_1 u_2&=\int\sech(x) u_2^2-\int \sech(x) u_{1,x}^2+\int \left[a\sech(x)+\frac{1}{2}\sech''(x)\right] u_1^2 + \int \sech(x) u_1 f(u_1).
\end{aligned}
\]
From this,  \eqref{10trois} follows readily by definition of the norm in $H^1_\omega\times L^2_\omega$ and (\ref{SFsix}).
This ends the proof of  Proposition \ref{pr:11}.
\end{proof}

\medskip
\noindent
{\underline{Step 4: Conclusion of the proof of  Theorem \ref{TH1}.}} 
Let
\begin{equation}\label{defH}
\mathcal H(t):= \int \sech\left(x\right)\left[u_{1,x}^2 +u_1^2 + u_2^2\right](t) .
\end{equation}
Then, using \eqref{eqvarphi}, we have {\YM
\begin{equation}\label{Kone}\begin{aligned}
\frac{d}{dt}{\mathcal  H}  &= 2 \int  \sech\left(x\right) \left(u_{1,xt} u_{1,x} + u_{1,x} u_1  + u_{2,t} u_2\right)\\
& = 2 \int \sech\left(x\right) \left[ u_{2,x} u_{1,x} +  u_2 u_1  + \left(u_{1,xx} +\mm u_1+f(u_1)\right)u_2 \right]\\
& =  2 \int  \sech\left(x\right) \left[\left(1+\mm\right)u_1+f(u_1)\right]u_2 - 2 \int \sech'(x) u_2 u_{1,x} ,
\end{aligned}\end{equation}
 and so
\begin{equation}\label{Ktwo}
\left| \frac{d}{dt}{\mathcal  H} \right| 
\lesssim \int \left(u_{1,x}^2+u_1^2+{u_2^2} \right) \sech\left(x\right)   \lesssim  \|u(t)\|_{H^1_{\omega}\times L^2_{\omega}}^2. 
\end{equation}}

From \eqref{11un} there exists a sequence $t_n\to +\infty$ such that $\mathcal H(t_n)\to 0$.
Let $t\in \RR$. Integrating on $[t,t_n]$ and passing to the limit as $n\to +\infty$ we obtain
\[
\mathcal H(t) \lesssim \int_{t}^{+\infty} \|u(t)\|_{H^1_{\omega}\times L^2_{\omega}}^2 dt.
\]
From \eqref{11un} it follows that $\lim_{t\to +\infty} \mathcal H(t)=0.$  
Thus, $\lim_{t\to + \infty} \|u\|_{H^1_{\omega}\times L^2_{\omega}}=0.$ 
 This implies (\ref{Conclusion_0}) and finishes the proof of the Theorem.


\providecommand{\bysame}{\leavevmode\hbox to3em{\hrulefill}\thinspace}
\providecommand{\MR}{\relax\ifhmode\unskip\space\fi MR }
\providecommand{\MRhref}[2]{%
  \href{http://www.ams.org/mathscinet-getitem?mr=#1}{#2}
}
\providecommand{\href}[2]{#2}

\end{document}